\DeclareMathAlphabet{\pazocal}{OMS}{zplm}{m}{n}
\let\oldReturn\Return
\renewcommand{\Return}{\State\oldReturn}
\pgfplotsset{compat=1.5}
\newcommand{\B}{\pazocal{B}}
\newcommand{\C}{\pazocal{C}}
\newcommand{\K}{\pazocal{K}}
\newcommand{\D}{\pazocal{D}}
\newcommand{\X}{\pazocal{X}}
\newcommand{\x}{{X}}
\renewcommand{\u}{{U}}
\newcommand{\remove}[1]{}
\def \mI{{\mathbb{I}}}
\def \cC{{\cal C}}
\def \cCl{{\cC^\ell}}
\def \*{\star}
\def \10n{\!\!\!\!\!\!\!\!\!\!}
\newcommand{\bK}{\bar{K}}
\newcommand{\R}{\mathbb{R}}
\newcommand{\bA}{\bar{A}}
\newcommand{\bB}{\bar{B}}
\newcommand{\bC}{\bar{C}}
\newtheorem{theorem}{Theorem}
\newtheorem{lem}{Lemma}
\newtheorem{defn}{Definition}
\newtheorem{cor}{Corollary}
\newtheorem{prob}{Problem}
\newtheorem{assume}{Assumption}
\newtheorem{prop}{Proposition}
\title{\LARGE \bf Sparsest Feedback Selection for Structurally Cyclic Systems with Dedicated Actuators and Sensors in Linear Time}
\author{Shana~Moothedath,
        Prasanna~Chaporkar
        and~Madhu~N.~Belur
\thanks{The authors are in the Department of Electrical Engineering, Indian Institute of Technology Bombay, India. Email: $\lbrace$shana, chaporkar, belur$\rbrace$@ee.iitb.ac.in.}}
\begin{document}
\maketitle
\thispagestyle{empty}
\pagestyle{empty}

\begin{abstract}
This paper solves the sparsest feedback selection problem for linear time invariant structured systems,
a long-standing open problem in structured systems. 
We consider structurally cyclic systems with dedicated inputs and outputs. 
We prove that finding a
sparsest feedback selection is of linear complexity for the case of 
structurally cyclic systems with dedicated inputs and outputs.
This problem has received attention recently but key errors in the hardness-proofs 
have resulted in an erroneous conclusion there. 
This is also elaborated in this brief paper
together with a counter-example.
\end{abstract}
\begin{IEEEkeywords}
Linear structured systems, Arbitrary pole placement, Linear output feedback, 
Sparsest feedback selection.
\end{IEEEkeywords}
\section{Introduction}\label{sec:intro}
Feedback selection for control systems that guarantees desired closed-loop performance is a fundamental design problem in control theory. The challenging part of the design problem is to accomplish an optimal design, for example in the sense of number of connections or cost of connections. 
We consider feedback selection in large scale linear dynamical systems. 
However, the analysis done in this paper is based on the zero/non-zero (referred as {\it sparsity}) pattern of the system. 
The rationale behind performing this analysis is, in most large scale systems and real time systems, the numerical values of the non-zero entries in the system description are either not known at all, like social networks, biological systems, or they are not known accurately, like electric networks, power grids, robotics. 
To this end, various system properties of these systems are studied using the sparsity pattern of the system referred as {\it structural analysis} \cite{Rei:88}.

Structural analysis of control systems, namely {\it structural controllability} was introduced by Lin in \cite{Lin:74}. 
Research in this area has become relevant due to applicability in various complex systems:
see \cite{LiuBar:16} and references therein for details. This paper discusses sparsest feedback selection that guarantees arbitrary pole placement. Necessary and sufficient graph theoretic condition that guarantee 
arbitrary pole placement is given in \cite{PicSezSil:84} using the concept of 
fixed modes \cite{WanDav:73}.

Given a large scale dynamical system, our aim is to find a minimum set of feedback edges, i.e., which output to be fed to which input, that arbitrary pole placement of the closed-loop system is possible. In other words, given the digraph representing the state dynamics, the inputs and the outputs of the system, our objective is to find a minimum set of feedback connections that ensure the desired design objective. 

Finding sparsest feedback matrix for a given structured system is considered in \cite{Sez:83}. 
The approach proposed requires a minimum input-output set to be found, which in itself is an NP-hard problem \cite{PeqSouPed:15}. 
The authors in \cite{UnySez:89} discuss minimum cost feedback selection, 
which is a more general problem. 
The method proposed there requires solving a multi-commodity network flow problem:
an NP-hard problem. 
Thus neither \cite{Sez:83} nor  \cite{UnySez:89} can yield
a polynomial time algorithm to the feedback selection problem, and hardness of the 
sparsest feedback selection problem remained unsolved.
For a structured state matrix, the problem of finding {\em jointly} sparsest input, output and 
feedback matrices is addressed in \cite{PeqKarAgu_2:16}. 
On the contrary, \cite{PeqKarPap:15} considers the problem when there is {\em no 
flexibility} in choosing the input and output matrices:  
given structured state, input and output matrices and cost associated with each of them 
(i.e, each input, output and feedback edge is associated with cost), find 
the minimum cost input-output set and the feedback matrix.
This problem is known to be NP-hard and hence \cite{PeqKarPap:15} considers a special class 
of systems where the state matrix is 
{\it irreducible}\footnote{A graph 
   is said to be irreducible if there exists a directed path between 
   any two vertices in the graph, i.e., strongly connected.}. 
For the case when the state, input and output matrices are fixed, the problem of finding 
a sparsest {\em feedback matrix} has been formulated 
in \cite{CarPeqAguKarPap:15, CarPeqAguKarPap:15_arx}, 
where the authors claim and `prove' the NP-hardness of the problem.  
Later below in Section~\ref{sec:fallacies} we elaborate about how NP-hardness
of finding a {\em particular} solution with special properties
(namely, when the sparsest solution's closed-loop
system digraph has exactly two SCCs) is not sufficient for a reduction
procedure in general. 
A counter-example is also described there.

In the context of NP-hardness of the 
sparsest feedback selection problem, we proved the NP-hardness recently
in \cite{MooChaBel:17_arxMinCostFeedback} 
using a reduction of the set cover problem.
In this paper we formulate
a subclass of systems referred as 
structurally cyclic systems with dedicated inputs and outputs: for this
subclass, we prove that finding a sparsest feedback matrix has linear
time complexity.  A system is said 
to be structurally {\em cyclic} if the state bipartite graph 
(see Section~\ref{sec:prelim}) has a perfect matching and an input (output, resp.) 
is said to be {\em dedicated} if it can actuate (sense, resp.) a single state only. 
The class of systems with the state bipartite graph having a perfect matching is wide:
for example, {\it self-damped} systems (see \cite{ChaMes:13}) including consensus dynamics 
in multi-agent systems and epidemic equations. 
Further, for systems whose system dynamics are invertible, the 
state bipartite graph necessarily has a perfect matching. 


 The paper is organized as follows.
 In Section~\ref{sec:prob} we formally define the problem that is considered here. 
Here, we also provide some required preliminaries and state some known results that 
we use subsequently. 
In Section~\ref{sec:fallacies}, we elaborate on the fallacies in the proof of NP-hardness result of the proposed problem given in \cite{CarPeqAguKarPap:15}. 
In Section~\ref{sec:poly}, we provide a linear complexity algorithm for solving the proposed problem. Finally, we conclude in Section~\ref{sec:conclu}.

\section{Problem Formulation and Preliminaries}\label{sec:prob}
In this section we formulate the problem considered in this paper 
and then give few preliminaries used in the sequel. 
\subsection{Problem Formulation}\label{sec:formulation}
Consider a linear time-invariant system  $\dot{x} = Ax+Bu$, $y=Cx$, where $A \in \R^{n \times n}$, $B \in \R^{n \times m}$ and $C \in \R^{p \times n}$. Here $\R$ denotes the set of real numbers. The structural representation of this system referred as {\it structured system} is denoted by $(\bA, \bB, \bC)$, where $\bA, \bB$ and $\bC$ has the same structure as that of $A, B$ and $C$ respectively. More precisely,
\begin{eqnarray}\label{eq:struc}
A_{ij} &=& 0 \mbox{~whenever~} \bA_{ij} = 0,\mbox{~and} \nonumber \\
B_{ij} &=& 0 \mbox{~whenever~} \bB_{ij} = 0,\mbox{~and} \nonumber \\
C_{ij} &=& 0 \mbox{~whenever~} \bC_{ij} = 0.
\end{eqnarray}
Given $(\bA, \bB, \bC)$, any tuple $(A, B, C)$ that satisfies \eqref{eq:struc} is referred as a {\it numerical realization} of the structured system. Let $\bK \in \{0, \* \}^{m \times p}$ denote a feedback matrix, where $\bK_{ij} = \*$ if $j^{\rm th}$ output is fed to $i^{\rm th}$ input. We define, $[K]:=\{K: K_{ij} = 0, \mbox{~if~} \bK_{ij} = 0\}$. 

\begin{defn}\label{def:struc}
The structured system $(\bA, \bB, \bC)$ and the feedback matrix $\bK$ is said not to have structurally fixed modes (SFMs) if there exists a numerical realization $(A, B, C)$ of $(\bA, \bB, \bC)$
 such that $\cap_{K \in [K]} \sigma(A + BKC) = \phi$, where $\sigma(T)$ denotes the set of eigenvalues of any square matrix $T$. 
 \end{defn}
 
Given a structured system $(\bA, \bB, \bC)$, our aim is to find a minimum set of feedback edges (i.e., sparsest $\bK$) such that the closed-loop structured system $(\bA,\bB,\bC,\bK)$ has no SFMs. Let $\K_s := \{\bK \in \{ 0, \*\}^{m \times p}:(\bA,\bB,\bC,\bK) \mbox{ has no SFMs} \}$. For a structured system $(\bA, \bB, \bC)$, without loss of generality, we assume that $\K_s$ is non-empty. Specifically, $\bK^f \in \K_s$, where $\bK^f_{ij} = \*$ for all $i,j$. Next we describe the problem addressed in this paper.
\begin{prob}\label{prob:one}
Given a structured system $(\bA, \bB, \bC)$, find
\[ \bK^\* ~\in~ \arg\min_{\10n \bK \in \K_s} \norm[\bK]_0. \] 
\end{prob}
Here $\norm[\cdot]_0$ denotes the zero matrix 
   norm\footnote{Although $\norm[\cdot]_0$ does not satisfy all 
   the norm axioms, the number of non-zero entries in a matrix is 
   conventionally referred to as the zero norm.}.
We refer to Problem~\ref{prob:one} as {\it sparsest feedback selection problem}. 

\subsection{Preliminaries}\label{sec:prelim}
Graph theory is a key tool in the analysis of structured systems since a structured system can be represented as a digraph and there exists necessary and sufficient graph theoretic conditions for various structural properties of the system \cite{Rei:88}. Given a structured system $(\bA, \bB, \bC)$ we first construct the system digraph denoted as $\D(\bA, \bB, \bC)$ which is constructed as follows: we define the state digraph $\D(\bA) := \D(V_X, E_X)$ where $V_X = \{x_1,\ldots,x_n\}$ and an edge $(x_j, x_i) \in E_{\x}$ if $\bA_{ij} \neq 0$. Thus a directed edge $(x_j, x_i)$ exists if state $x_j$ can influence state $x_i$. Define the system digraph $\D(\bA, \bB, \bC) := \D(V_{\x}\cup V_{\u}\cup V_{Y}, E_{\x}\cup E_{\u}\cup E_{Y})$, where  $V_{U} = \{u_1, \ldots, u_m \}$ and $V_{Y} = \{y_1, \ldots, y_p \}$. An edge $(u_j, x_i) \in E_{U}$ if $\bB_{ij} \neq 0$ and an edge $(x_j, y_i) \in E_{Y}$ if $\bC_{ij} \neq 0$. Thus a directed edge $(u_j, x_i)$ exists if input $u_j$ can {\it actuate} state $x_i$ and a directed edge $(x_j, y_i)$ exists if output $y_i$ can {\it sense} state $x_j$ and this completes the construction of the system digraph. 

Given a structured system $(\bA, \bB, \bC)$ and a feedback matrix $\bK$, we define the closed-loop system digraph $\D(\bA, \bB, \bC, \bK) := \D(V_{\x}\cup V_{\u}\cup V_{Y}, E_{\x}\cup E_{\u}\cup E_{Y}\cup E_{K})$, where $(y_j, u_i) \in E_K$ if $ \bK_{ij}\neq 0$. Here a directed edge $(y_j, u_i)$ exists if output $y_j$ can be {\it fed to} input $u_i$. 

A digraph is said to be strongly connected if for each ordered pair of vertices $(v_1,v_k)$
there exists an elementary path from $v_1$ to $v_k$. A strongly connected component (SCC) is a subgraph that consists of a maximal set of strongly connected vertices. 
Now, using the closed-loop system digraph $\D(\bA, \bB, \bC, \bK)$ the following result has been shown \cite{PicSezSil:84}.
\begin{prop} [\cite{PicSezSil:84}, Theorem 4]\label{prop:SFM} 
A structured system $(\bA, \bB, \bC)$ have no structurally fixed modes with respect to an information pattern $\bK$ if and only if the following conditions hold:

\noindent a)~in the digraph $\D(\bA, \bB, \bC, \bK)$, each state node $x_i$ is contained in an SCC which includes an edge from $E_K$, and 

\noindent b)~there exists a finite node disjoint union of cycles $\C_g = (V_g, E_g)$ in $\D(\bA, \bB, \bC, \bK)$ where $g$ 
belongs to the set of natural numbers such that $V_X \subseteq \cup_{g}V_g$.
\end{prop}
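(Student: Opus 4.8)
The plan is to reduce this generic (structural) statement to an algebraic fixed-mode test for a single, carefully chosen numerical realization, and then lift it to almost all realizations by a genericity argument. I would begin from the algebraic characterization of fixed modes (Wang--Davison, refined by Anderson--Clements): for a fixed realization $(A,B,C)$ and pattern $\bK$, a complex number $\lambda$ belongs to $\cap_{K\in[K]}\sigma(A+BKC)$ if and only if, for some selection of input and output indices compatible with the zero pattern of $\bK$, the composite matrix
\[
\begin{bmatrix} A-\lambda I & B\\ C & 0\end{bmatrix}
\]
(with the corresponding rows and columns deleted) has rank strictly less than $n$. By Definition~\ref{def:struc}, the system has no SFMs exactly when some realization makes this rank test fail for every $\lambda$, so the goal is to show that conditions (a) and (b) are precisely what guarantees this for generic parameter values.

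For necessity I would argue the contrapositive. If (a) fails, a state $x_i$ lies in an SCC of $\D(\bA,\bB,\bC,\bK)$ containing no edge of $E_K$; grouping the states in such feedback-free SCCs together with everything they subsequently feed isolates a principal block of $A+BKC$ whose entries are independent of $K$ for every $K\in[K]$. Every eigenvalue of that block is then common to all gains and hence fixed, for every realization. If instead (b) fails, then the closed-loop digraph admits no family of node-disjoint cycles covering $V_X$; by the K\"onig--Hall correspondence between disjoint cycle families and non-zero terms of a determinant expansion, the generic rank of the relevant closed-loop state matrix is below $n$, so its determinant vanishes identically in the free entries and in the gains, pinning the eigenvalue $0$ for every realization. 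Either way the intersection is non-empty for all realizations.

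For sufficiency I would assume (a) and (b) and construct a realization with empty intersection. The disjoint cycle family $\C_g$ from (b) supplies a surviving term in the structural determinant, so the closed-loop state matrix has generic rank $n$ and $\lambda=0$ is not forced; the SCC condition (a) ensures each state can both reach and be reached from some feedback edge, so no mode can be structurally decoupled from the gain. I would then encode ``$\lambda$ is not a fixed mode'' as the non-vanishing of a finite list of polynomials in the free entries---one per admissible input/output selection---and use the cycles of (b) together with the loop-closing paths of (a) to exhibit, in each such polynomial, a monomial that does not cancel, proving it is not identically zero. Since a finite intersection of complements of proper algebraic varieties is non-empty (and dense), a realization simultaneously satisfying all the conditions exists.

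The step I expect to be the main obstacle is this sufficiency direction, and specifically making the genericity argument uniform in $\lambda$: the rank test must fail for every complex number and every feedback selection at once, whereas a naive construction risks producing a realization that only works for a fixed $\lambda$. The crux is to merge the two combinatorial conditions into a single polynomial certificate---showing that the cycle cover of (b) and the connecting paths of (a) jointly force a non-zero coefficient in the governing minors for all but finitely many $\lambda$---and then to clear the finitely many exceptional values by a further perturbation. This bookkeeping across input/output selections and across $\lambda$, rather than any individual rank computation, is where the genuine difficulty resides.
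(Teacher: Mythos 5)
First, a point of reference: the paper contains no proof of this proposition at all. It is imported verbatim, with citation, as Theorem~4 of \cite{PicSezSil:84}, and is used downstream only as a black box. So your attempt can only be measured against the proof in the cited literature, and in that respect your route --- the Wang--Davison/Anderson--Clements algebraic fixed-mode test combined with a genericity argument in the free parameters --- is essentially the classical one of \cite{PicSezSil:84} and \cite{PapTsi:84}, not a genuinely different alternative. Your necessity direction is essentially sound, modulo one repair: the $K$-independent block is \emph{not} ``the feedback-free SCC together with everything it subsequently feeds'' (downstream SCCs may contain feedback edges, so that larger principal submatrix does depend on $K$). The correct statement is that, after permuting states according to a topological order of the condensation of $\D(\bA,\bB,\bC,\bK)$, the matrix $A+BKC$ is block lower-triangular simultaneously for every $K\in[K]$, and the diagonal block indexed by a feedback-free SCC equals the corresponding block of $A$ alone, so its spectrum is common to all $K$ and all realizations inherit a fixed mode. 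Your argument for failure of~(b) --- no node-disjoint cycle family covering $V_X$ forces $\det(A+BKC)\equiv 0$ as a polynomial in the free entries, pinning the eigenvalue $0$ --- is correct as stated.

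The genuine gap is exactly where you yourself locate it: sufficiency, which in your write-up is a plan rather than a proof, and it is the heart of the theorem. Two things are missing. First, the algebraic test you take as a starting point is proved in the cited sources for \emph{decentralized} (block-diagonal) patterns; for an arbitrary pattern $\bK\in\{0,\*\}^{m\times p}$ you must first establish the generalized version --- $\lambda$ is a fixed mode if and only if there exist index sets $I$ (inputs) and $J$ (outputs) with $\bK_{ij}=0$ for all $i\in I$, $j\in J$, such that $\operatorname{rank}\left[\begin{smallmatrix} A-\lambda I & B_I\\ C_J & 0\end{smallmatrix}\right]<n$ --- and that is itself a nontrivial lemma, not a notational variant. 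Second, the claim you defer as ``bookkeeping'' --- that conditions~(a) and~(b) jointly produce a non-cancelling monomial in each governing minor --- is precisely the content of the theorem; until it is carried out, nothing has been shown. On the $\lambda$-uniformity worry specifically, there is a standard simplification you missed: since $0\in[K]$, the fixed modes of any realization satisfy $\cap_{K\in[K]}\sigma(A+BKC)\subseteq\sigma(A)$, a finite set, so you never need the rank test to hold for all $\lambda\in\mathbb{C}$ at once; it suffices that, generically, no eigenvalue of $A$ survives in every admissible closed loop, which can be phrased as the non-vanishing of finitely many resultants in the free parameters. That reduction dissolves the uniformity issue but still leaves the combinatorial construction of a surviving term --- i.e., the step you did not do --- as the actual work.
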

 
Given a closed-loop structured system $(\bA, \bB, \bC, \bK)$ we can check condition~a) in $O(n^2)$ computations and condition~b) in $O(n^{2.5})$ computations \cite{PapTsi:84}. Thus checking SFMs in a structured system has complexity $O(n^{2.5})$. The objective here is to find a sparsest feedback matrix such that the resulting closed-loop system has no SFMs. We consider structurally cyclic systems with dedicated inputs and outputs. A structurally cyclic system is defined as follows: see \cite{Van:91}.

\begin{defn}\label{def:cyclic}
A structured system $\bA$ is said to be structurally cyclic if the state bipartite graph $\B(\bA)$ has a perfect matching.
\end{defn}
Thus in a structurally cyclic system all state vertices lie in disjoint union of cycles which consists of only $x_i$'s and thus condition~b) in Proposition~\ref{prop:SFM} is satisfied. Thus the feedback selection problem needs to satisfy only condition~a) in Proposition~\ref{prop:SFM}. Henceforth we consider structurally cyclic systems with dedicated inputs and outputs. Thus the following assumption holds.

\begin{assume}\label{asm:cyclic}
The structured system $(\bA, \bB, \bC)$ satisfies $\bB = \mI_n$, $\bC = \mI_n$ and $\B(\bA)$ has a perfect matching.
\end{assume}

The authors in \cite{CarPeqAguKarPap:15} claim that Problem~\ref{prob:one} is NP-hard. 
However, the proof provided is not complete. 
The instance of Problem ~\ref{prob:one} constructed in the NP-hardness proof given in \cite{CarPeqAguKarPap:15} satisfies Assumption~\ref{asm:cyclic}. We show that for this case, the problem can be solved in linear time complexity. 
We mention briefly the error in the proof and the result given in \cite{CarPeqAguKarPap:15} in detail in the next section. 
We also give our algorithm of linear complexity for solving Problem~\ref{prob:one} when Assumption~\ref{asm:cyclic} holds.

\section{Graph Decomposition Problem and the Sparsest Feedback Selection Problem}\label{sec:fallacies}

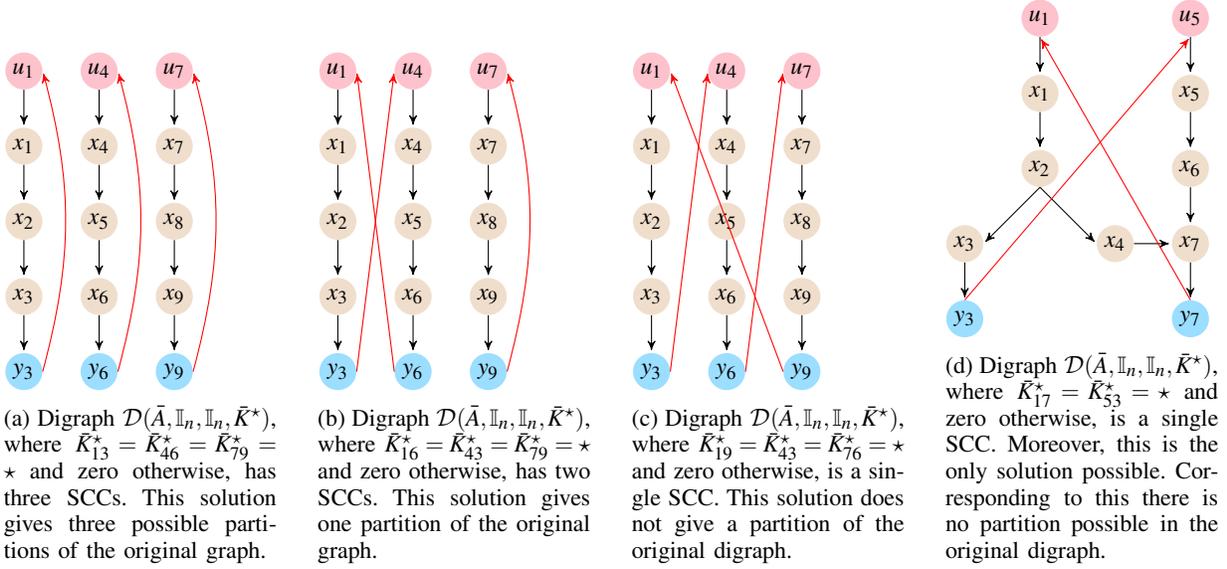
\begin{figure*}[!t]
\begin{center}
\begin{subfigure}[b]{0.2\textwidth}
\begin{tikzpicture}[->,>=stealth',shorten >=1pt,auto,node distance=1.25cm, main node/.style={circle,draw,font=\scriptsize\bfseries}]
\definecolor{myblue}{RGB}{80,80,160}
\definecolor{almond}{rgb}{0.94, 0.87, 0.8}
\definecolor{bubblegum}{rgb}{0.99, 0.76, 0.8}
\definecolor{columbiablue}{rgb}{0.61, 0.87, 1.0}

  \fill[bubblegum] (0, 3) circle (7.0 pt);
  \fill[bubblegum] (1,3) circle (7.0 pt);
  \fill[bubblegum] (2,3) circle (7.0 pt);

  \fill[columbiablue] (0,-1) circle (7.0 pt);
  \fill[columbiablue] (1,-1.0) circle (7.0 pt);
  \fill[columbiablue] (2,-1) circle (7.0 pt);
 
  \fill[almond] (0,0) circle (7.0 pt);
  \fill[almond] (1,0) circle (7.0 pt);
  \fill[almond] (2,0) circle (7.0 pt);
  \fill[almond] (0,1) circle (7.0 pt);
  \fill[almond] (1,1) circle (7.0 pt);
  \fill[almond] (2,1) circle (7.0 pt);
  \fill[almond] (0,2) circle (7.0 pt);
  \fill[almond] (1,2) circle (7.0 pt);
  \fill[almond] (2,2) circle (7.0 pt);

   \node at (0,3) {\small $u_1$};
   \node at (1,3) {\small $u_4$};
   \node at (2,3) {\small $u_7$};

   \node at (0,-1) {\small $y_3$};
   \node at (1,-1) {\small $y_6$};   
   \node at (2,-1) {\small $y_9$};
   
  \node at (0,2) {\small $x_1$};
  \node at (0,1) {\small $x_2$};
  \node at (0,0) {\small $x_3$};
  \node at (1,2) {\small $x_4$}; 
  \node at (1,1) {\small $x_5$};  
  \node at (1,0) {\small $x_6$};   
  \node at (2,2) {\small $x_7$};   
  \node at (2,1) {\small $x_8$};
  \node at (2,0) {\small $x_9$};
  
  \draw (0,1.75)  ->   (0,1.25);
  \draw (0,0.75)  ->   (0,0.25);  
  \draw (1,1.75)  ->   (1,1.25);
  \draw (1,0.75)  ->   (1,0.25);
  \draw (2,1.75)  ->   (2,1.25);
  \draw (2,0.75)  ->   (2,0.25);

  \draw (0,2.75)  ->   (0,2.25);
  \draw (1,2.75)  ->   (1,2.25);
  \draw (2,2.75)  ->   (2,2.25);
  
  \draw (0,-0.25)  ->   (0,-0.75);
  \draw (1,-0.25)  ->   (1,-0.75);
  \draw (2,-0.25)  ->   (2,-0.75);

\path[every node/.style={font=\sffamily\small}]
(0.25,-1) edge[red,bend right = 15] node [left] {} (0.25,3)
(1.25,-1) edge[red,bend right = 15] node [left] {} (1.25,3)
(2.25,-1) edge[red,bend right = 15] node [left] {} (2.25,3);

\end{tikzpicture}
\caption{Digraph $\D(\bA, \mI_n, \mI_n, \bK^\*)$, where $\bK^\*_{13} = \bK^\*_{46} = \bK^\*_{79} = \*$ and zero otherwise, has three SCCs. This solution gives three possible partitions of the original graph.}
\label{fig:counter_eg1}
\end{subfigure}~\hspace{3 mm}
\begin{subfigure}[b]{0.2\textwidth}
\begin{tikzpicture}[->,>=stealth',shorten >=1pt,auto,node distance=1.25cm, main node/.style={circle,draw,font=\scriptsize\bfseries}]
\definecolor{myblue}{RGB}{80,80,160}
\definecolor{almond}{rgb}{0.94, 0.87, 0.8}
\definecolor{bubblegum}{rgb}{0.99, 0.76, 0.8}
\definecolor{columbiablue}{rgb}{0.61, 0.87, 1.0}

  \fill[bubblegum] (0, 3) circle (7.0 pt);
  \fill[bubblegum] (1,3) circle (7.0 pt);
  \fill[bubblegum] (2,3) circle (7.0 pt);

  \fill[columbiablue] (0,-1) circle (7.0 pt);
  \fill[columbiablue] (1,-1.0) circle (7.0 pt);
  \fill[columbiablue] (2,-1) circle (7.0 pt);
 
  \fill[almond] (0,0) circle (7.0 pt);
  \fill[almond] (1,0) circle (7.0 pt);
  \fill[almond] (2,0) circle (7.0 pt);
  \fill[almond] (0,1) circle (7.0 pt);
  \fill[almond] (1,1) circle (7.0 pt);
  \fill[almond] (2,1) circle (7.0 pt);
  \fill[almond] (0,2) circle (7.0 pt);
  \fill[almond] (1,2) circle (7.0 pt);
  \fill[almond] (2,2) circle (7.0 pt);

   \node at (0,3) {\small $u_1$};
   \node at (1,3) {\small $u_4$};
   \node at (2,3) {\small $u_7$};

   \node at (0,-1) {\small $y_3$};
   \node at (1,-1) {\small $y_6$};   
   \node at (2,-1) {\small $y_9$};
   
  \node at (0,2) {\small $x_1$};
  \node at (0,1) {\small $x_2$};
  \node at (0,0) {\small $x_3$};
  \node at (1,2) {\small $x_4$}; 
  \node at (1,1) {\small $x_5$};  
  \node at (1,0) {\small $x_6$};   
  \node at (2,2) {\small $x_7$};   
  \node at (2,1) {\small $x_8$};
  \node at (2,0) {\small $x_9$};
  
  \draw (0,1.75)  ->   (0,1.25);
  \draw (0,0.75)  ->   (0,0.25);  
  \draw (1,1.75)  ->   (1,1.25);
  \draw (1,0.75)  ->   (1,0.25);
  \draw (2,1.75)  ->   (2,1.25);
  \draw (2,0.75)  ->   (2,0.25);

  \draw (0,2.75)  ->   (0,2.25);
  \draw (1,2.75)  ->   (1,2.25);
  \draw (2,2.75)  ->   (2,2.25);
  
  \draw (0,-0.25)  ->   (0,-0.75);
  \draw (1,-0.25)  ->   (1,-0.75);
  \draw (2,-0.25)  ->   (2,-0.75);
  
  \draw [red] (0.25,-1)  ->   (0.75,3);
  \draw [red] (0.75,-1)  ->   (0.25,3);

\path[every node/.style={font=\sffamily\small}]
(2.25,-1) edge[red,bend right = 15] node [left] {} (2.25,3);

\end{tikzpicture}
\caption{Digraph $\D(\bA, \mI_n, \mI_n, \bK^\*)$, where $\bK^\*_{16} = \bK^\*_{43} = \bK^\*_{79} = \*$ and zero otherwise, has two SCCs. This solution gives one partition of the original graph.}
\label{fig:counter_eg2}
\end{subfigure}~\hspace{3 mm}
\begin{subfigure}[b]{0.2\textwidth}
\begin{tikzpicture}[->,>=stealth',shorten >=1pt,auto,node distance=1.25cm, main node/.style={circle,draw,font=\scriptsize\bfseries}]
\definecolor{myblue}{RGB}{80,80,160}
\definecolor{almond}{rgb}{0.94, 0.87, 0.8}
\definecolor{bubblegum}{rgb}{0.99, 0.76, 0.8}
\definecolor{columbiablue}{rgb}{0.61, 0.87, 1.0}

  \fill[bubblegum] (0, 3) circle (7.0 pt);
  \fill[bubblegum] (1,3) circle (7.0 pt);
  \fill[bubblegum] (2,3) circle (7.0 pt);

  \fill[columbiablue] (0,-1) circle (7.0 pt);
  \fill[columbiablue] (1,-1.0) circle (7.0 pt);
  \fill[columbiablue] (2,-1) circle (7.0 pt);
 
  \fill[almond] (0,0) circle (7.0 pt);
  \fill[almond] (1,0) circle (7.0 pt);
  \fill[almond] (2,0) circle (7.0 pt);
  \fill[almond] (0,1) circle (7.0 pt);
  \fill[almond] (1,1) circle (7.0 pt);
  \fill[almond] (2,1) circle (7.0 pt);
  \fill[almond] (0,2) circle (7.0 pt);
  \fill[almond] (1,2) circle (7.0 pt);
  \fill[almond] (2,2) circle (7.0 pt);

   \node at (0,3) {\small $u_1$};
   \node at (1,3) {\small $u_4$};
   \node at (2,3) {\small $u_7$};

   \node at (0,-1) {\small $y_3$};
   \node at (1,-1) {\small $y_6$};   
   \node at (2,-1) {\small $y_9$};
   
  \node at (0,2) {\small $x_1$};
  \node at (0,1) {\small $x_2$};
  \node at (0,0) {\small $x_3$};
  \node at (1,2) {\small $x_4$}; 
  \node at (1,1) {\small $x_5$};  
  \node at (1,0) {\small $x_6$};   
  \node at (2,2) {\small $x_7$};   
  \node at (2,1) {\small $x_8$};
  \node at (2,0) {\small $x_9$};
  
  \draw (0,1.75)  ->   (0,1.25);
  \draw (0,0.75)  ->   (0,0.25);  
  \draw (1,1.75)  ->   (1,1.25);
  \draw (1,0.75)  ->   (1,0.25);
  \draw (2,1.75)  ->   (2,1.25);
  \draw (2,0.75)  ->   (2,0.25);

  \draw (0,2.75)  ->   (0,2.25);
  \draw (1,2.75)  ->   (1,2.25);
  \draw (2,2.75)  ->   (2,2.25);
  
  \draw (0,-0.25)  ->   (0,-0.75);
  \draw (1,-0.25)  ->   (1,-0.75);
  \draw (2,-0.25)  ->   (2,-0.75);
  
  \draw [red] (0.25,-1)  ->   (0.75,3);
  \draw [red] (1.25,-1)  ->   (1.75,3);
  \draw [red] (1.75,-1)  ->   (0.25,3);

\end{tikzpicture}
\caption{Digraph $\D(\bA, \mI_n, \mI_n, \bK^\*)$, where $\bK^\*_{19} = \bK^\*_{43} = \bK^\*_{76} = \*$ and zero otherwise, is a single SCC. This solution does not give a partition of the original digraph. }
\label{fig:counter_eg3}
\end{subfigure}~\hspace{3 mm}
\begin{subfigure}[b]{0.2\textwidth}
\begin{tikzpicture}[->,>=stealth',shorten >=1pt,auto,node distance=1.25cm, main node/.style={circle,draw,font=\scriptsize\bfseries}]
\definecolor{myblue}{RGB}{80,80,160}
\definecolor{almond}{rgb}{0.94, 0.87, 0.8}
\definecolor{bubblegum}{rgb}{0.99, 0.76, 0.8}
\definecolor{columbiablue}{rgb}{0.61, 0.87, 1.0}

  \fill[bubblegum] (0, 3) circle (7.0 pt);
  \fill[bubblegum] (2,3) circle (7.0 pt);

  \fill[columbiablue] (-1,-1) circle (7.0 pt);     
   \fill[columbiablue] (2,-1) circle (7.0 pt);
 
  \fill[almond] (0,2) circle (7.0 pt);
  \fill[almond] (0,1) circle (7.0 pt);
  \fill[almond] (-1,0) circle (7.0 pt);
  \fill[almond] (1,0) circle (7.0 pt);  
  \fill[almond] (2,2) circle (7.0 pt);
  \fill[almond] (2,1) circle (7.0 pt);  
  \fill[almond] (2,0) circle (7.0 pt);        

   \node at (0,3) {\small $u_1$};
   \node at (2,3) {\small $u_5$};

   \node at (-1,-1) {\small $y_3$};
   \node at (2,-1) {\small $y_7$};   
   
  \node at (0,2) {\small $x_1$};
  \node at (0,1) {\small $x_2$};
  \node at (-1,0) {\small $x_3$};  
  \node at (1,0) {\small $x_4$};   
  \node at (2,2) {\small $x_5$}; 
  \node at (2,1) {\small $x_6$};  
  \node at (2,0) {\small $x_7$};
  
  \draw (0,1.75)  ->   (0,1.25);
  \draw (0,0.75)  ->   (-0.75,0);
  \draw (0,0.75)  ->   (0.75,0);   
  \draw (2,1.75)  ->   (2,1.25);
  \draw (2,0.75)  ->   (2,0.25);
  \draw (1.25,0)  ->   (1.75,0);
  
  \draw (2,-0.25)  ->   (2,-0.75);
  \draw (-1,-0.25)  ->   (-1,-0.75);
  
  \draw (2,2.75)  ->   (2,2.25);
  \draw (0,2.75)  ->   (0,2.25);
  
  \draw [red] (2,-0.75)  ->   (0,2.75);
  \draw [red] (-1,-0.75)  ->   (2,2.75);
  
\end{tikzpicture}
\caption{Digraph $\D(\bA, \mI_n, \mI_n, \bK^\*)$, where $\bK^\*_{17} = \bK^\*_{53} = \*$ and zero otherwise, is a single SCC. Moreover, this is the only solution possible. Corresponding to this there is no partition possible in the original digraph. }
\label{fig:counter_eg4}
\end{subfigure}~\hspace{3 mm}
\caption{Illustrative figure demonstrating the incompleteness in the reduction given in \cite{CarPeqAguKarPap:15}. In all the figures, 
	each state vertex $x_k$ also has a self loop which we have omitted for the sake of clarity. Also, each state vertex $x_k$ has input $u_k$ and output $y_k$ connected which are omitted for many $x_k$'s for the sake of clarity.}
\label{fig:counter_eg}
\end{center}
\end{figure*}

In this section we focus on the subtle difference between the approach followed
in \cite{CarPeqAguKarPap:15} for solving the sparsest feedback selection problem, in
which the authors link this problem with the graph decomposition problem. In this
section we elaborate on a key error in their reduction procedure.
We later consider a counter-example that helps understand the error.

Recall that Problem~\ref{prob:one} aims at finding a sparsest feedback matrix $\bK$ 
such that the closed-loop system has no SFMs. 
This problem has been shown to be NP-hard in \cite{CarPeqAguKarPap:15} using 
reduction from a known NP-complete problem, the graph decomposition problem \cite{Tar:84},
described below.
\begin{prob}[Graph Decomposition Problem \cite{Tar:84}]
	\label{prob:graph_partition}
	Given a directed acyclic graph $\D(V,E)$, find a partition of $V$ into two non-empty sets $\Gamma_1$ and $\Gamma_2$ such that
	
	\noindent
	$\bullet$ no edge in $E$ connects any vertex in $\Gamma_1$ to a vertex in $\Gamma_2$;
	
	\noindent
	$\bullet$ for every vertex $v \in \Gamma_i$, $i= 1,2$, there exists a source-sink pair and a path between them such that the path contains $v$ and passes only through the nodes in $\Gamma_i$. Here, source (sink, resp.) refers to a node that does not have any incoming (outgoing, resp.) edge.  
\end{prob}


The authors of  \cite{CarPeqAguKarPap:15} have taken a general instance of 
Problem~\ref{prob:graph_partition} and constructed an instance of Problem~\ref{prob:one}. 
Next, in order to claim NP-hardness of Problem~\ref{prob:one}, it 
is shown that if an optimal solution $\bK^\*$ to Problem~\ref{prob:one} results 
in {\em two SCCs} in $\D(\bA,\bB,\bC,\bK^\*)$, then the original graph 
can be decomposed as required.
However, this result is not enough. 
For proving the NP-hardness of a given problem using reduction, one must reduce 
an arbitrary instance of a known NP-complete or NP-hard problem to an 
instance of the given problem with polynomial complexity. 
Further, the reduction must also be such that {\it \underline{any}} optimal solution to 
the given problem must give an optimal solution to the NP-complete or 
NP-hard problem chosen and vice versa \cite{GarJoh:02}.

However, \cite{CarPeqAguKarPap:15} only shows that an optimal solution $\bK^\*$ to Problem~\ref{prob:one} that has two SCCs in $\D(\bA, \bB, \bC, \bK^\*)$ gives a solution to the decomposition problem. 
This does not answer the case where an optimal solution $\bK^\*$ results in a 
different number of SCCs in $\D(\bA, \bB, \bC, \bK^\*)$. 
Importantly, one must also address the case when an optimal solution results 
in a {\em single} SCC and specify what happens in the graph decomposition problem then. 
More precisely, for the reduction to be complete, one should show that any optimal solution to Problem~\ref{prob:one} gives an optimal solution to the graph decomposition problem and vice-versa. 
Thus, if an optimal solution $\bK^\*$ to Problem~\ref{prob:one} exists that results in a single SCC in $\D(\bA, \bB, \bC, \bK^\*)$, then it is not clear about the corresponding solution of the graph decomposition problem. 

We demonstrate the ambiguity in the proof using the illustrative examples given in Figure~\ref{fig:counter_eg}. Note that in Figure~\ref{fig:counter_eg1}, the optimal solution $\bK^\*$ results in three SCCs. However, there are three possible partitioning of the original graph that can be obtained from this. These partitions are: ${\Gamma_{1}^1} = \{x_1, \ldots, x_6\}$ and $\Gamma_2^1 = \{x_7, x_8, x_9\}$, $\Gamma_1^2 = \{x_1, x_2, x_3\}$ and $\Gamma_2^2 = \{x_4, \ldots, x_9\}$ and $\Gamma_1^3 = \{x_1, x_2, x_3, x_7, x_8, x_9\}$ and $\Gamma_2^3 = \{x_4, x_5, x_6\}$. 
Now another optimal solution given in Figure~\ref{fig:counter_eg2} results in two SCCs. 
Corresponding to this solution, there is a partitioning of the original graph, $\Gamma_1 = \{x_1, \ldots, x_6\}$ and $\Gamma_2 = \{x_7, x_8, x_9\}$. 
The optimal solution given in Figure~\ref{fig:counter_eg3} results in a single SCC. 
In this case, according to \cite{CarPeqAguKarPap:15}, from $\bK^\*$ it is not possible to say whether there exists a partitioning of the original problem. Note that in this example, many optimal $\bK^\*$ are possible, and the original graph can be decomposed irrespective of the solution
chosen. 
On the contrary, consider the system given in Figure~\ref{fig:counter_eg4} and the corresponding optimal solution. Notice that for this structured system, given $\bK^\*$ is the only optimal solution to Problem~\ref{prob:one} and it results in a single SCC. Moreover, the original graph can not be decomposed. Thus, the case where optimal solution results in a single SCC, we can not conclude either way for the graph decomposition problem.
Thus, the reduction in \cite{CarPeqAguKarPap:15} is inconclusive.

In summary, in our opinion, the problem that is shown to be 
NP-hard in \cite{CarPeqAguKarPap:15} can be stated as:
given a structured system $(\bA,\bB,\bC)$, find $\bK$ such 
that $\D(\bA, \bB, \bC, \bK)$ has two or more SCCs.
Note that this is slightly different than Problem~\ref{prob:one}. 
In the next section, we provide a linear time algorithm for solving Problem~\ref{prob:one}.

\section{Linear Complexity Algorithm for Solving Problem~\ref{prob:one}}\label{sec:poly}
The structured system $(\bA, \bB, \bC)$ considered in the NP-hard proof given in \cite{CarPeqAguKarPap:15} satisfies Assumption~\ref{asm:cyclic}. In this section we show that if the structured system satisfies Assumption~\ref{asm:cyclic}, then Problem~\ref{prob:one} can be solved in linear time. The proposed solution is the consequence of the following important observation. 
\begin{lem}\label{lem:single_SCC}
Consider a structured system $(\bA, \bB, \bC)$. Let $\bK^\*$ be an optimal solution to Problem~\ref{prob:one} such that all state nodes lie in $\beta$ number of SCCs in $\D(\bA, \bB, \bC, \bK^\*)$, where $\beta > 1$. Then, there exists another optimal solution $\bK^\*_{\rm new}$ such that $\norm[\bK^\*]_0 = \norm[\bK^\*_{\rm new}]_0$ and all state nodes lie in $\beta - 1$ number of SCCs in $\D(\bA, \bB, \bC, \bK^\*_{\rm new})$.
\end{lem}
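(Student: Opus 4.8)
The plan is to prove the statement by an explicit local modification of $\bK^\*$ that fuses exactly two of the $\beta$ SCCs into one while leaving the total number of feedback edges unchanged. First I would record the consequences of Assumption~\ref{asm:cyclic} and Proposition~\ref{prop:SFM}: since $\bB=\bC=\mI_n$, every feedback entry $\bK_{ij}=\*$ contributes the edge $(y_j,u_i)$ which, together with the dedicated edges $(x_j,y_j)$ and $(u_i,x_i)$, behaves exactly like a state-to-state connection $x_j\to x_i$. Because a cycle through a feedback edge must pass through state nodes, every nontrivial SCC of $\D(\bA,\bB,\bC,\bK^\*)$ contains state nodes, and by condition~a) of Proposition~\ref{prop:SFM} each of the $\beta$ state-SCCs $S_1,\dots,S_\beta$ carries at least one feedback edge.

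Next I would form the condensation DAG of $S_1,\dots,S_\beta$ and choose the pair to be fused so as to avoid absorbing a third SCC. Fixing a topological order, I would take $S_b$ to be a sink and $S_a$ to be the ancestor of $S_b$ that occurs last in that order (or, if $S_b$ is isolated, any other SCC, which is then incomparable to $S_b$). The point I would check here is that no third SCC $S_c$ satisfies $S_a\rightsquigarrow S_c\rightsquigarrow S_b$: since the fusion only creates the return reachability $S_b\rightsquigarrow S_a$, such an $S_c$ is exactly what could be swallowed, and the extremal choice of $S_a$ (together with $S_b$ being a sink) rules it out. This is what forces the SCC count to drop by precisely one rather than more.

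The fusion itself is a head swap. I would pick a feedback edge $e_a=(x_{j_a},x_{i_a})$ with both ends in $S_a$ and $e_b=(x_{j_b},x_{i_b})$ with both ends in $S_b$, delete them, and insert $f_1=(x_{j_a},x_{i_b})$ and $f_2=(x_{j_b},x_{i_a})$; the corresponding $\bK^\*_{\rm new}$ sets $\bK_{i_b\,j_a}=\bK_{i_a\,j_b}=\*$ in place of $\bK_{i_a\,j_a}=\bK_{i_b\,j_b}=\*$, so that $\norm[\bK^\*_{\rm new}]_0=\norm[\bK^\*]_0$ (should a new entry coincide with an already-present one, the resulting solution would be strictly sparser yet still feasible, contradicting optimality of $\bK^\*$, so this cannot occur). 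To show $S_a\cup S_b$ becomes a single SCC I would first observe that, since $S_a$ is strongly connected, a simple path $P_a:x_{i_a}\rightsquigarrow x_{j_a}$ exists inside $S_a$ and cannot use the backward edge $e_a=(x_{j_a},x_{i_a})$; likewise $P_b:x_{i_b}\rightsquigarrow x_{j_b}$ inside $S_b$ avoids $e_b$. Then $P_a\,f_1\,P_b\,f_2$ is a cycle, and any former use of the deleted edge $e_a$ can be rerouted through $x_{j_a}\xrightarrow{f_1}x_{i_b}\rightsquigarrow x_{j_b}\xrightarrow{f_2}x_{i_a}$ (and symmetrically for $e_b$), so all vertices of $S_a\cup S_b$ remain mutually reachable.

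Finally I would close the argument. The merged component contains the feedback edges $f_1,f_2$, so all of its states satisfy condition~a); the untouched components $S_3,\dots,S_\beta$ retain their feedback edges and internal structure, and by the no-cascade choice of $S_a,S_b$ none of them fuse. Hence condition~a) holds everywhere, the closed loop still has no SFMs, $\bK^\*_{\rm new}\in\K_s$, and its cost equals that of $\bK^\*$, so it is again optimal while now having exactly $\beta-1$ state-SCCs. I expect the main obstacle to be the two connectivity bookkeeping points: proving that deleting one feedback edge never destroys the internal $x_{i}\rightsquigarrow x_{j}$ path needed for the rerouting, and proving that the extremal choice of $S_a,S_b$ prevents a third SCC from being swallowed, so that the count drops by precisely one.
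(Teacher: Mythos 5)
Your proof is correct, and its engine is the same as the paper's: condition~a) of Proposition~\ref{prop:SFM} guarantees a feedback edge inside each of the $\beta$ state-SCCs, and cross-swapping the heads of two such edges (the paper breaks $(y_a,u_b)$, $(y_c,u_d)$ and inserts $(y_a,u_d)$, $(y_c,u_b)$ --- exactly your $f_1,f_2$) merges those two components into one SCC without changing the number of feedback edges. The genuine difference is your no-cascade selection of the pair. The paper merges an \emph{arbitrary} pair $\cCl_i,\cCl_j$ and concludes that the state nodes now lie in $\beta-1$ SCCs; but if a third state-SCC lies strictly between $\cCl_i$ and $\cCl_j$ in the condensation order, it is absorbed into the merged component as well, and the count drops below $\beta-1$. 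So the paper's argument, read literally, establishes only ``at most $\beta-1$'', whereas your extremal choice (a sink $S_b$ together with its last ancestor $S_a$ in a topological order, the incomparable case handled separately) is precisely what is needed to prove the lemma as stated, with the count dropping by exactly one. What each approach buys: yours is airtight for the literal claim; the paper's laxer version costs nothing downstream, since Corollary~\ref{cor:SCC} and Theorem~\ref{th:linear} only need the SCC count to strictly decrease until it reaches one. You also settle two points the paper leaves implicit: that a swapped-in entry cannot coincide with an existing nonzero entry of $\bK^\*$ (otherwise one obtains a feasible, strictly sparser solution, contradicting optimality), and that the internal rerouting paths can be taken simple and hence never use the deleted edges. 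One caveat against you: the lemma is stated for a general $(\bA,\bB,\bC)$, and you invoke Assumption~\ref{asm:cyclic} at the outset to recast feedback edges as state-to-state edges; your swap goes through verbatim on the edges $(y_a,u_b)$, $(y_c,u_d)$ without that assumption (as the paper does it), so the restriction is cosmetic, but as written your proof covers only the dedicated-input/output case.
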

\begin{proof}
Given $\bK^\*$ is an optimal solution to Problem~\ref{prob:one} and  all state nodes lie in $\beta$ number of SCCs in $\D(\bA, \bB, \bC, \bK^\*)$, say $\cCl_1, \ldots, \cCl_\beta$. Pick two SCCs, say $\cCl_i, \cCl_j$. Since $\bK^\*$ satisfies condition~a) both $\cCl_i$ and $\cCl_j$ has a feedback edge in it. Let $(y_a, u_b) \in \cCl_i$ and $(y_c, u_d) \in \cCl_j$. 
Now, break the edges $(y_a, u_b), (y_c, u_d)$ and make the edges $(y_a, u_d), (y_c, u_b)$. We claim that now all the nodes in $\cCl_i$ and $\cCl_j$ lie in a single SCC with feedback edges $(y_a, u_d), (y_c, u_b)$. To prove this we need to show that there exists a directed path between two arbitrary vertices in them. Consider any four arbitrary vertices $v_p, v_q \in \cCl_i$ and $v_\zeta, v_\delta \in \cCl_j$. Since $\cCl_i$ is an SCC, notice that there exists a directed path from $u_b$ to $v_p$. Similarly, there exists a directed path from $v_p$ to $y_a$ also. Thus there exists a directed path from $u_b$ to $y_a$ passing through $v_p$. Similarly, we can show a directed path from  $u_b$ to $y_a$ passing through $v_q$. Further, using the same argument on $\cCl_j$ we can show that there exists a directed path from $u_d$ to $y_c$ passing through $v_\zeta$ and from $u_d$ to $y_c$ passing through $v_\delta$. These paths are shown using dotted lines in the Figure~\ref{fig:single_SCC}. 
Now, on adding edges $(y_a, u_d), (y_c, u_b)$ all these vertices $v_p, v_q, v_\zeta$ and $v_\delta$ lie in cycles as shown in the Figure~\ref{fig:single_SCC}. Thus there exists a path between any two arbitrary vertices in $\cCl_i$, any two arbitrary vertices in $\cCl_j$ and any two arbitrary vertices in $\cCl_i, \cCl_j$. Thus 
by breaking edges $(y_a, u_b), (y_c, u_d)$ and making edges $(y_a, u_d), (y_c, u_b)$ all the vertices in $\cCl_i$ and $\cCl_j$ lie in a single SCC. Thus given an optimal feedback matrix, there exists another feedback matrix with the same number of edges, hence optimal, such that all the state nodes are spanned by one less number of SCCs in the closed-loop system digraph. This completes the proof.
\end{proof}

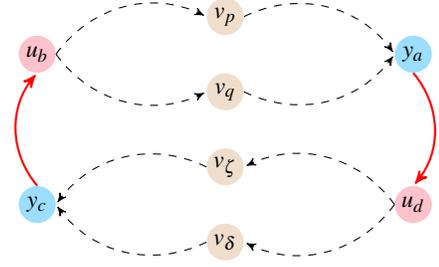
\begin{figure}[H]
\begin{center}
\begin{tikzpicture}[->,>=stealth',shorten >=1pt,auto,node distance=1.25cm, main node/.style={circle,draw,font=\scriptsize\bfseries}]
\definecolor{myblue}{RGB}{80,80,160}
\definecolor{almond}{rgb}{0.94, 0.87, 0.8}
\definecolor{bubblegum}{rgb}{0.99, 0.76, 0.8}
\definecolor{columbiablue}{rgb}{0.61, 0.87, 1.0}

  \fill[bubblegum] (0, 0) circle (7.0 pt);
  \fill[almond] (2.5,0.5) circle (7.0 pt);
  \fill[almond] (2.5,-0.5) circle (7.0 pt);
  \fill[columbiablue] (5,0) circle (7.0 pt);

   \node at (5,0) {\small $y_a$};
   \node at (2.5,0.5) {\small $v_p$};
   \node at (2.5,-0.5) {\small $v_q$};  
   \node at (0,0) {\small $u_b$};

  \fill[columbiablue] (0, -2) circle (7.0 pt);
  \fill[almond] (2.5,-1.5) circle (7.0 pt);
  \fill[almond] (2.5,-2.5) circle (7.0 pt);
  \fill[bubblegum] (5,-2) circle (7.0 pt);

   \node at (5,-2) {\small $u_d$};
   \node at (2.5,-1.5) {\small $v_\zeta$};
   \node at (2.5,-2.5) {\small $v_\delta$};  
   \node at (0,-2) {\small $y_c$};  

\path[every node/.style={font=\sffamily\small}]
(0.25,0) edge[dashed,bend left = 40] node [left] {} (2.25,0.5)
(2.75,0.5) edge[dashed, bend left = 40] node [left] {} (4.75,0)
(0.25,0) edge[dashed,bend right = 40] node [left] {} (2.25,-0.5)
(2.75,-0.5) edge[dashed, bend right = 40] node [left] {} (4.75,0)

(2.25,-1.5) edge[dashed,bend right = 40] node [left] {} (0.25,-2) 
(4.75,-2) edge[dashed, bend right = 40] node [left] {} (2.75,-1.5)
(2.25,-2.5) edge[dashed,bend left = 40] node [left] {} (0.25,-2)
(4.75,-2) edge[dashed, bend left = 40] node [left] {} (2.75,-2.5)

(5,-0.25) edge[red,thick,bend left = 40] node [left] {} (5,-1.75)
(0,-1.75) edge[red,thick, bend left = 40] node [left] {} (0,-0.25);
\end{tikzpicture}
\caption{Schematic diagram depicting the construction used in the proof of Lemma~\ref{lem:single_SCC}. Dotted lines between two vertices here denotes existence of a directed path between them.}
\label{fig:single_SCC}
\end{center}
\end{figure}
As a consequence of Lemma~\ref{lem:single_SCC}, we have the following corollary.

\begin{cor}\label{cor:SCC}
Consider a structured system $(\bA, \bB, \bC)$. Then, there exists an optimal solution $\bK^\*$ to Problem~\ref{prob:one} such that all state nodes lie in a single SCC in $\D(\bA, \bB, \bC, \bK^\*)$.
\end{cor}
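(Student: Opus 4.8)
The plan is to prove Corollary~\ref{cor:SCC} directly from Lemma~\ref{lem:single_SCC} by a finite induction (equivalently, a strict-descent argument) on the number of SCCs that contain state nodes. Since $\K_s$ is assumed non-empty, Problem~\ref{prob:one} admits at least one optimal solution; first I would fix such a solution $\bK^\*$ and let $\beta$ denote the number of SCCs in $\D(\bA, \bB, \bC, \bK^\*)$ that contain the state nodes. By condition~b) of Proposition~\ref{prop:SFM} every state node lies in some such SCC, so $\beta$ is a well-defined finite positive integer, and the base case $\beta = 1$ is immediate: such a $\bK^\*$ already places all state nodes in a single SCC and is the required solution.

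For the descent step, suppose $\beta > 1$. Then Lemma~\ref{lem:single_SCC} applies verbatim and returns another feedback matrix $\bK^\*_{\rm new}$ with $\norm[\bK^\*]_0 = \norm[\bK^\*_{\rm new}]_0$ that is again optimal, and for which the state nodes are spanned by exactly $\beta - 1$ SCCs in $\D(\bA, \bB, \bC, \bK^\*_{\rm new})$. Relabelling $\bK^\*_{\rm new}$ as the current optimal solution, the count of state-containing SCCs has strictly decreased while optimality is preserved. Iterating, the quantity $\beta$ is a positive integer that drops by exactly one at each application of the lemma, so after at most $\beta - 1$ applications the procedure terminates with an optimal solution whose closed-loop digraph has all state nodes in a single SCC; this is the claimed $\bK^\*$.

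Because Lemma~\ref{lem:single_SCC} does all the graph-theoretic work of merging two SCCs without changing the edge count, there is no genuine obstacle here. The only point I would be careful to state explicitly is that optimality must be maintained across every iteration so that the terminal matrix is still a minimizer for Problem~\ref{prob:one}; this is exactly what the lemma guarantees, since it returns a feedback matrix of the same zero-norm that continues to avoid SFMs (condition~a) is re-established by the newly created feedback edges $(y_a,u_d)$ and $(y_c,u_b)$, and condition~b) holds automatically under Assumption~\ref{asm:cyclic}). The induction is well-founded since $\beta$ is finite and strictly decreasing, so termination in finitely many steps is immediate.
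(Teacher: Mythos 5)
Your proposal is correct and takes essentially the same approach as the paper: the paper's own justification of Corollary~\ref{cor:SCC} is precisely the recursive application of Lemma~\ref{lem:single_SCC} to an arbitrary optimal solution until the number of SCCs spanning the state nodes drops to one. Your finite-induction/strict-descent formulation, with $\beta$ as the decreasing measure, simply makes the paper's one-sentence argument explicit.
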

The above corollary is true, since given any optimal solution $\bK'$ to Problem~\ref{prob:one}, we can apply Lemma~\ref{lem:single_SCC} recursively such that we arrive at an optimal solution $\bK^\*$ such that all state nodes lie in a single SCC in $\D(\bA, \bB, \bC, \bK^\*)$. 
Thus solving Problem~\ref{prob:one} on a structured system $(\bA, \bB, \bC)$ is same as finding the minimum number of feedback edges to add in the digraph $\D(\bA, \bB, \bC)$ such that in the resulting digraph all state nodes lie in an SCC. 

 The problem of finding minimum number of edges to add in a digraph that the resulting graph is strongly connected is referred as {\it strong connectivity augmentation problem} \cite{EswTar:76}. We briefly explain the strong connectivity augmentation problem here for the sake of completeness. Given a directed graph $\D(V, E)$, the strong connectivity augmentation problem aims at finding the minimum cardinality set of edges $E'$ such that $\D(V, E \cup E')$ is strongly connected. First note that if $\D(\bA)$ is irreducible, then $E' = \phi$ and any $\bK$ that has
a single non-zero entry is optimal. Hence, from now on we only focus on the non-trivial cases such that $\D(\bA)$ has at least two SCCs. There exists a linear time algorithm for solving the above problem optimally \cite {Rag:05}. Given a directed graph the algorithm given in \cite {Rag:05} gives the minimum set of edges that when added to the graph results in a single SCC. Using this result now we give the linear time algorithm to solve Problem~\ref{prob:one} on structured systems that satisfy Assumption~\ref{asm:cyclic}:

\noindent Step~1: Given a structured system $(\bA, \bB = \mI_n, \bC = \mI_n)$, solve the strong connectivity augmentation problem on the digraph $\D(\bA)$. Let $E_\X$ denotes the optimal solution obtained. 

\noindent Step~2: Define $\bK = \{ \bK_{ij} = \*: (x_j, x_i) \in E_\X \}$.

Note that connecting defining $\bK$ as given in Step~2 is possible since $\bB = \bC = \mI_n$. Now we prove that $\bK$ obtained in Step~2 is an optimal solution to Problem~\ref{prob:one}.

\begin{theorem}\label{th:linear}
Consider a structured system $(\bA, \bB, \bC)$ such that Assumption~\ref{asm:cyclic} holds. Then, Problem~\ref{prob:one} can be solved in $O(n)$ complexity, where $n$ denotes the number of states.
\end{theorem}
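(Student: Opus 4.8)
The plan is to reduce Problem~\ref{prob:one}, under Assumption~\ref{asm:cyclic}, to the strong connectivity augmentation problem on the state digraph $\D(\bA)$, and then invoke the known linear-time algorithm of \cite{Rag:05}. First I would dispose of feasibility. Since $\B(\bA)$ has a perfect matching, all state vertices lie in a disjoint union of cycles, so condition~b) of Proposition~\ref{prop:SFM} holds automatically for \emph{every} $\bK$; hence the absence of SFMs is equivalent to condition~a) alone, namely that each state node lies in an SCC containing a feedback edge. By Corollary~\ref{cor:SCC}, there is always an optimal solution in which \emph{all} state nodes lie in a single SCC, so without loss of optimality one may search only over feedback matrices that place every state vertex into one common SCC.

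The crux of the argument is to exploit the dedicated-actuator/sensor structure $\bB=\bC=\mI_n$ to set up an exact correspondence between feedback edges and state-to-state edges. Because each input $u_i$ actuates only $x_i$ (edge $u_i\to x_i$) and each output $y_j$ senses only $x_j$ (edge $x_j\to y_j$), a single feedback edge $(y_j,u_i)$ creates precisely the directed path $x_j\to y_j\to u_i\to x_i$, and therefore contributes to the strong connectivity of the state vertices exactly as a virtual state edge $(x_j,x_i)$ would. Thus there is a bijection between the nonzero entries $\bK_{ij}$ and the edges $(x_j,x_i)$ added to $\D(\bA)$, and a feedback matrix places all states in a single SCC if and only if the corresponding augmenting edge set renders $\D(\bA)$ strongly connected. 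Consequently $\min_{\bK\in\K_s}\norm[\bK]_0$ equals the minimum number of edges whose addition makes $\D(\bA)$ strongly connected, with the single caveat that in the degenerate case where $\D(\bA)$ is already strongly connected one feedback edge is still required, and is optimal, to satisfy condition~a).

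Finally I would combine optimality and complexity. Step~1 computes a minimum augmenting edge set $E_\X$ for $\D(\bA)$ via the linear-time algorithm of \cite{Rag:05}, and Step~2 realizes it as $\bK_{ij}=\*$ for each $(x_j,x_i)\in E_\X$, which is legitimate precisely because $\bB=\bC=\mI_n$. By the correspondence above this $\bK$ is feasible (it satisfies conditions a) and b)) and attains the minimum possible number of feedback edges, hence is an optimal solution to Problem~\ref{prob:one}. The total running time is dominated by computing the SCCs and condensation of $\D(\bA)$ together with the augmentation routine, each linear in the size of $\D(\bA)$, which yields the claimed $O(n)$ complexity.

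The hard part will be the second step: one must argue that feedback edges can do neither more nor less for the strong connectivity of the state vertices than the corresponding virtual state edges, so that the reduction is tight in \emph{both} directions. Any feasible $\bK$ induces an augmenting set that strongly connects $\D(\bA)$ (by Corollary~\ref{cor:SCC}), and conversely any strong-connectivity augmentation is realizable by feedback edges under the dedicated I/O assumption; it is this two-sided equivalence, rather than the mere existence of \emph{some} good feedback matrix, that makes the minimum feedback count coincide exactly with the augmentation number. Once this equivalence is established, both optimality and the linear running time follow immediately from Corollary~\ref{cor:SCC} and \cite{Rag:05}.
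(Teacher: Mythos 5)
Your proposal is correct and follows essentially the same route as the paper: it reduces Problem~\ref{prob:one} to strong connectivity augmentation on $\D(\bA)$ via the single-SCC existence result (Corollary~\ref{cor:SCC}), uses the correspondence between a feedback edge $(y_j,u_i)$ and a virtual state edge $(x_j,x_i)$ enabled by $\bB=\bC=\mI_n$, handles the degenerate irreducible case with one mandatory feedback edge, and invokes the linear-time algorithm of \cite{Rag:05}. The only difference is presentational: you phrase optimality as a two-sided equivalence between feedback counts and augmentation numbers, while the paper argues it by contradiction (a sparser feasible $\bK'$ would yield a smaller augmenting edge set), which is the same content.
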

\begin{proof}
Here we prove that solving strong connectivity augmentation problem on $\D(\bA)$ gives an optimal solution to Problem~\ref{prob:one}. The structured system given has to satisfy one of the following cases: i)~$\D(\bA)$ is irreducible; ii)~$\D(\bA)$ is reducible. In case~i), solution to the strong connectivity augmentation problem, $E_\X = \phi$. Then, an optimal solution to Problem~\ref{prob:one} is given by $\{\bK: \bK_{11} = \* \mbox{~and~} 0 \mbox{~otherwise~}\}$. In case~ii), we prove that $\bK$ obtained in Step~2 corresponding to $E_\X$ is an optimal solution to Problem~\ref{prob:one}. We first show that $\bK$ is a feasible solution, i.e., $\bK \in \K_s$. By the construction of $\bK$ given in Step~2 notice that all state nodes lie in a single SCC in $\D(\bA, \bB, \bC, \bK)$. Also, $\D(\bA)$ is not irreducible. Thus condition~a) is satisfied for all states. Thus $\bK \in \K_s$. Now we prove that $\bK$ is an optimal solution to Problem~\ref{prob:one}, i.e., $\norm[\bK]_0 = \norm[\bK^\*]_0$. Suppose not. Then there exists $\bK' \in \K_s$ such that $\norm[\bK']_0 < \norm[\bK]_0$ and by Corollary~\ref{cor:SCC} all state nodes lie in a single SCC in $\D(\bA, \bB, \bC, \bK')$. Consider edges $E'_\X$ where $(x_j, x_i) \in E'_\X$ if $\bK'_{ij} = \*$. Notice that $|E'_\X| < |E_\X|$ and $\D(V_X, E_X \cup E'_\X)$ is an SCC. This contradicts the assumption that $E_\X$ is an optimal solution to the strong connectivity augmentation problem. This proves that the feedback matrix obtained by solving the strong connectivity augmentation problem on $\D(\bA)$, $\bK = \{ \bK_{ij} = \*: (x_j, x_i) \in E_\X \}$, is an optimal solution to Problem~\ref{prob:one}.

Now the complexity of the strong connectivity augmentation algorithm is linear in the number of nodes in the digraph. Since $|V_X| = n$, the result follows.  \\
\end{proof}
Though we show that when $\bB = \bC = \mI_n$ and all feedback links are feasible, this algorithm gives an optimal solution to Problem~\ref{prob:one}, these results do not immediately extend to the cases where some feedback links are not feasible or feedback links are associated with costs. We believe that these problems are NP-hard and approximation algorithms fro these problems will be subject of future work.
%
%
\section{Conclusion}\label{sec:conclu}
This paper deals with optimal feedback selection of structured systems. The objective here is to obtain a sparsest feedback matrix such that the resulting closed-loop system has no structurally fixed modes. This problem was considered in 
Carvalho et.al in \cite{CarPeqAguKarPap:15, CarPeqAguKarPap:15_arx} 
though we elaborated in this paper on an error in their proof of NP-hardness.
We have shown recently in \cite{MooChaBel:17_arxMinCostFeedback} that 
this problem is NP-hard.
Further, in this paper, we also proved that solving this problem is, 
in fact, not NP-hard on 
the subclass of systems considered in \cite{CarPeqAguKarPap:15}, i.e., 
structurally cyclic with dedicated inputs and outputs. 
Finally, we provided an algorithm for this subclass of systems 
that has linear complexity in the number of states of the system.



\end{document}